\def\rd{\mathbb{R}^d}
\renewcommand{\epsilon}{\varepsilon}
\newcommand{\rarw}{\rightarrow}
\newcommand\pot{^}
\newcommand\ubd{\overline{\mbox{\rm dim}}_{\rm B}\,} 
\newcommand\lbd{\underline{\mbox{\rm dim}}_{\rm B}\,} 
\newcommand\da{\underline{\mbox{\rm dim}}_{\,\theta}} 
\newcommand\uda{\overline{\mbox{\rm dim}}_{\,\theta}} 
\newcommand{\dimh}{\dim_{\rm H}}
\newcommand{\be}{\begin{equation}} 
	\newcommand{\ee}{\end{equation}}
\newtheorem{theo}{Theorem}[section]
\newtheorem*{theo*}{Theorem}
\newtheorem*{cor*}{Corollary}
\newtheorem{prop}[theo]{Proposition}
\newtheorem{defi}[theo]{Definition}
\newtheorem{question}{Question}[section]
\title {Two-Scale Frostman Measures}
\author{Nicolás Angelini}
\address{Departamento de Matemática, Facultad de Ciencias Físico Matemáticas y Naturales, Universidad Nacional de San Luis,\\
and Instituto de Matemática Aplicada San Luis (IMASL, CONICET), San Luis, Argentina.}
\email{nicolas.angelini.2015@gmail.com}
\author{Úrsula Molter}
\address{Departamento de Matemática, Facultad de Ciencias Exactas y Naturales, Universidad de Buenos Aires,\\
and Instituto de Investigaciones Matemáticas Luis A. Santaló (IMAS, UBA-CONICET), Buenos Aires, Argentina.}
\email{umolter@gmail.com}
\subjclass[2020]{28A80, 28A75}
\keywords{Fractals, Frostman measures,Intermediate dimensions}
\begin{document}

\begin{abstract}
We establish a unified Frostman-type framework connecting the classical Hausdorff dimension with the family of intermediate dimensions $\dim_\theta$ recently introduced by Falconer, Fraser and Kempton.
We define a new geometric quantity $\mathcal{D}(E)$ and prove that, under mild assumptions, there exists a family of measures $\{\mu_\delta\}$ supported on $E$ satisfying two simultaneous decay conditions, corresponding to the Hausdorff and intermediate Frostman inequalities.
Such $(\delta, s, t)$-Frostman measures allow for a two-scale characterization of the dimension of $E$.
\end{abstract}
\maketitle

\section{Introduction}

The study of fractal dimensions provides a quantitative framework for describing the fine-scale geometry of sets across multiple scales. A unifying theme in this area is the characterization of dimension through the decay of measures supported on the set.

The classical example of this correspondence is Frostman’s lemma, which relates the Hausdorff dimension of a closed set \(E \subset \mathbb{R}^n\) to the polynomial decay of measures supported on it. Specifically, Frostman \cite{frostman1935potential} proved that \( \mathcal{H}^s(E) > 0 \) if and only if there exists a Borel probability measure \(\mu\) supported on \(E\) such that 
\begin{equation}\label{Hausdorff}
    \mu(B(x,r)) \le c\, r^s 
    \quad \text{for all } x \in \mathbb{R}^n \text{ and } r>0,
\end{equation}
for some constant \(c>0\). Here, \(\mathcal{H}^s\) denotes the \(s\)-dimensional Hausdorff measure.

More recently, Falconer, Fraser, and Kempton \cite{FFK} introduced a one-parameter family of \emph{intermediate dimensions}, denoted by the lower and upper quantities \(\underline{\dim}_\theta E\) and \(\overline{\dim}_\theta E\) for \(\theta \in [0,1]\). These dimensions interpolate continuously between the Hausdorff and box-counting dimensions, capturing a broader range of scaling behaviors by restricting the admissible diameters in the definition of the Hausdorff dimension to lie within a geometric range determined by \(\theta\).

The study of intermediate dimensions has developed rapidly, and several recent works have explored their geometric and measure-theoretic properties. For instance, projection theorems have been investigated in \cite{BFF,angelini2025critical}, attainable forms in \cite{BanajiRutar2022,angelini2024intermediate}, and images of more general functions in \cite{BFF2}. In parallel, various generalizations and alternative definitions of measures have been proposed \cite{Banaji,ANGELINI2026130039}.

In analogy with the classical Hausdorff dimension, intermediate dimensions can also be characterized in terms of the polynomial decay of measures supported on \(E\). The following proposition provides a Frostman-type characterization for these dimensions.

\begin{prop}\label{FrostmanI}
Let \(E \subset \mathbb{R}^n\) be compact and \(\theta \in (0,1]\).
\begin{enumerate}
    \item If \(\underline{\dim}_\theta E > 0\), then for every \(s \in (0, \underline{\dim}_\theta E)\) there exists a constant \(c>0\) such that, for every \(\delta \in (0,1)\), one can find a Borel probability measure \(\mu_\delta\) supported on \(E\) satisfying
    \[
        \mu_\delta(B(x,r)) \le c\, r^s 
        \quad \text{for all } x \in E \text{ and } \delta^{1/\theta} \le r \le \delta.
    \]
    \item If \(\overline{\dim}_\theta E > 0\), then for every \(s \in (0, \overline{\dim}_\theta E)\) there exists a constant \(c>0\) such that, for every \(\delta_0 > 0\), there exist \(\delta \in (0, \delta_0)\) and a Borel probability measure \(\mu_\delta\) supported on \(E\) satisfying
    \begin{equation}\label{FrostmanIneq}
        \mu_\delta(B(x,r)) \le c\, r^s 
        \quad \text{for all } x \in E \text{ and } \delta^{1/\theta} \le r \le \delta.
    \end{equation}
\end{enumerate}
\end{prop}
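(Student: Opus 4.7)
My plan is to adapt the standard Frostman construction to the restricted range of scales $[\delta^{1/\theta}, \delta]$ by invoking max-flow / min-cut duality on the finite dyadic tree spanning those two scales. Since the admissible scales form only a bounded window, no Besicovitch-type covering or limiting procedure is needed: a single finite flow construction will produce $\mu_\delta$.

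For part (1), fix $s \in (0, \underline{\dim}_\theta E)$. The definition of $\underline{\dim}_\theta E$ forces $\liminf_{\delta \to 0} \inf \sum_i |U_i|^s =: L > 0$, where the infimum is over covers of $E$ with $\delta^{1/\theta} \le |U_i| \le \delta$; hence there is $\delta_0 > 0$ such that every admissible cover at scale $\delta \in (0,\delta_0)$ satisfies $\sum_i |U_i|^s \ge L/2$. Dyadic cubes at levels $j \in \{k,\ldots,\ell\}$ chosen so that $\sqrt{n}\, 2^{-j} \in [\delta^{1/\theta}, \delta]$ are themselves admissible, so the same lower bound applies to dyadic covers up to a dimensional factor. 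I would then form the standard flow network: a source $S$ joined to each level-$k$ cube meeting $E$, each cube at level $j < \ell$ joined to its children at level $j+1$, each level-$\ell$ cube meeting $E$ joined to a sink $T$, and every cube-node split by an internal edge of capacity $2^{-js}$ (with all remaining edges of infinite capacity). Cuts in this network correspond precisely to dyadic covers of $E$ at levels in $[k,\ell]$ with value $\sum_Q |Q|^s$ up to a dimensional constant, so the min-cut is bounded below by a positive constant depending only on $n$, $s$, and $L$. Ford--Fulkerson now delivers a flow of that value, and the flows into the leaves define a measure $\nu_\delta$, which I place on $E$ by assigning each leaf-mass to a chosen point of $E \cap Q_\ell$ (using compactness of $E$). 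The node-capacity constraint becomes $\nu_\delta(Q) \le 2^{-js}$ for every admissible dyadic $Q$, and after normalizing to $\mu_\delta = \nu_\delta/\nu_\delta(E)$, the bound for $B(x,r)$ with $\delta^{1/\theta} \le r \le \delta$ follows from the observation that $B(x,r)$ meets $O_n(1)$ dyadic cubes of side comparable to $r$. The remaining range $\delta \in [\delta_0, 1)$ is trivial: any Dirac mass on $E$ satisfies the bound with a constant depending only on $\delta_0$.

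Part (2) is handled identically along a sequence $\delta_n \to 0$ produced by the definition of $\overline{\dim}_\theta E$: the cover-sum lower bound holds along such a sequence rather than uniformly, and for each $\delta_n$ the same flow construction yields $\mu_{\delta_n}$.

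The main difficulty I foresee is not conceptual but in the bookkeeping of constants: one must quantify the passage between diameters and side lengths of dyadic cubes, relate node-capacities on the tree to Frostman bounds on Euclidean balls, and verify that the final $c$ depends only on $n$, $s$, and $L$, and so is uniform in $\delta$. A slightly cleaner alternative to Ford--Fulkerson is a direct top-down distribution of mass over the finite dyadic tree (assign total mass at the root and split recursively subject to the capacity $2^{-js}$ at each level), which is morally the same argument and avoids quoting a network-flow theorem; I would likely present that version in the actual write-up.
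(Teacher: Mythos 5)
Your argument is correct in outline, and it reaches the statement by a genuinely different route from the one the paper uses. The paper in fact states Proposition~\ref{FrostmanI} without proof; the in-paper analogue is the proof of Theorem~\ref{frostam modificado}, which builds the measure by hand: it assigns mass $2^{-mt}$ to each dyadic cube at the fine level $2^{-m}\approx\delta^{1/\theta}$, spreads it uniformly over descendants meeting $E$, then passes to successively coarser levels up to scale $\delta$, truncating wherever the mass exceeds $2^{-jt}$; the maximal cubes where the cap is attained form an admissible cover, and the lower bound $\sum_i|U_i|^t>\epsilon$ coming from the definition of $\uda E$ bounds the total mass from below, which permits normalization. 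Your max-flow/min-cut formulation is the dual packaging of that construction: the min-cut of the node-capacitated tree \emph{is} the infimum of $\sum_Q|Q|^s$ over admissible dyadic covers, so the cover-sum lower bound converts directly into a lower bound on the total mass without having to exhibit the extremal cover, and the uniformity of $c$ in $\delta$ is transparent because everything depends only on your $L$ (the paper's $\epsilon$) and on $n$ and $s$. What the paper's explicit redistribution buys in exchange is that it extends to the two-exponent estimate of Theorem~\ref{frostam modificado}, where one also needs decay with a second exponent below the scale $\delta^{1/\theta}$; a single-capacity flow does not give that directly. Two points you correctly flag as bookkeeping do need care in a write-up: (i) the window $[\delta^{1/\theta},\delta]$ must contain at least one dyadic diameter for the cut-to-cover dictionary to apply --- for $\theta=1$, and for $\delta$ close to $1$ when $\theta$ is near $1$, it need not, so that case must be treated separately or the dyadic window enlarged by a factor of $2$ with the loss absorbed into $c$; and (ii) for $r$ in the window but below the smallest (or above the largest) admissible dyadic diameter, $B(x,r)$ must be compared with boundedly many cubes at the extreme admissible level, which costs only a bounded factor since those diameters are within a factor $2$ of $\delta^{1/\theta}$ and $\delta$ respectively. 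Neither point is a gap in the idea.
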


\medskip

In this paper, we introduce a new geometric quantity \(\mathcal{D}(E)\) and establish sufficient conditions ensuring the existence of a family of measures \(\{\mu_\delta\}\) that simultaneously satisfy both decay estimates \eqref{Hausdorff} and \eqref{FrostmanIneq}. This provides a unified framework linking the classical Frostman lemma with its intermediate counterpart.

\begin{theo*}
    Let $\theta > 0$ and $E \subset \mathbb{R}\pot{d}$ be a compact set such that $0 < \mathcal{D} (E)$. 
    Then for all $0 < t < \uda E$ and all $0 < s < \mathcal{D}(E)$ with $s \le t$, there exists $c>0$ such that for all $\delta_0 > 0$ there exist $\delta \in (0, \delta_0)$ and a Radon measure $\mu_{\delta}$ supported on $E$ satisfying
    \begin{equation}\label{aaclarar3}
        \mu_{\delta}(B(x,r)) \le 
        \begin{cases}
            c\, (\delta^{1/\theta})^{t-s} r^{s}, & \text{if } r \in (0, \delta^{1/\theta}),\\[4pt]
            c\, r^{t}, & \text{if } r \in [\delta^{1/\theta}, \delta].
        \end{cases}
    \end{equation}
    for all $x \in E$.
    
    Equivalently, if $0< t < \da E$ and $0<s<\mathcal{D}(E)$, there exist constants $c>0$ and $\delta_0>0$ such that for all $\delta \in (0, \delta_0]$ there exists a measure $\mu_\delta$ supported on $E$ satisfying \eqref{aaclarar3}.
\end{theo*}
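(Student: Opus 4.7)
The plan is to glue together two Frostman-type building blocks at the break-point scale $\delta^{1/\theta}$: a coarse-scale intermediate Frostman measure $\nu_\delta$ supplied by Proposition \ref{FrostmanI}, and a family of fine-scale classical Frostman measures of exponent $s$ living on the pieces $E\cap B_i$ of a cover of $E$ by balls of radius $\delta^{1/\theta}$. The hypothesis $s<\mathcal{D}(E)$ is the device that produces the latter in a quantitatively uniform way.

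Fix $0<s\le t$ with $t<\uda E$ and $s<\mathcal{D}(E)$, and let $\delta_0>0$. By Proposition \ref{FrostmanI}(2), choose $\delta\in(0,\delta_0)$ and a Borel probability measure $\nu_\delta$ on $E$ with $\nu_\delta(B(x,r))\le c_1 r^{t}$ for all $x\in E$ and $\delta^{1/\theta}\le r\le\delta$. Pick a maximal $\delta^{1/\theta}$-separated set $\{x_i\}\subset\mathrm{supp}(\nu_\delta)$ and set $B_i:=B(x_i,\delta^{1/\theta})$; the $B_i$ then cover $\mathrm{supp}(\nu_\delta)$ with bounded overlap $M=M(d)$, and $\nu_\delta(B_i)\le c_1(\delta^{1/\theta})^{t}$.

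Next I invoke $s<\mathcal{D}(E)$, interpreted as supplying — uniformly in the base ball — a Frostman measure of exponent $s$ on $E\cap B_i$ with total mass comparable to $(\delta^{1/\theta})^{s}$. This yields measures $\tilde\eta_i$ with $\tilde\eta_i(B(x,r))\le c_2 r^{s}$ and $\tilde\eta_i(E\cap B_i)\ge c_3(\delta^{1/\theta})^{s}$. Rescale to
\[
\eta_i:=\frac{\nu_\delta(B_i)}{\tilde\eta_i(E\cap B_i)}\,\tilde\eta_i,
\]
so that $\eta_i(E)=\nu_\delta(B_i)$ and $\eta_i(B(x,r))\le c_4(\delta^{1/\theta})^{t-s} r^{s}$ for every $x\in\rd$ and $r>0$. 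Set $\mu_\delta:=\sum_i\eta_i$, a Radon measure on $E$ of total mass at most $M$.

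It remains to verify \eqref{aaclarar3}. For $r\in(0,\delta^{1/\theta})$ a ball $B(x,r)$ meets at most $M$ pieces $E\cap B_i$, giving $\mu_\delta(B(x,r))\le Mc_4(\delta^{1/\theta})^{t-s}r^{s}$; for $r\in[\delta^{1/\theta},\delta]$ every $B_i$ touching $B(x,r)$ lies in $B(x,3r)$, so $\mu_\delta(B(x,r))\le M\nu_\delta(B(x,3r))\le 3^{t}Mc_1 r^{t}$ (after the routine observation that the Frostman bound on $\nu_\delta$ extends from $[\delta^{1/\theta},\delta]$ to the inflated range with an absorbed constant since $\nu_\delta$ is a probability measure). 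The equivalent formulation with $\da$ follows by invoking part (1) of Proposition \ref{FrostmanI} in place of part (2), producing a uniform $\delta_0$ for which the construction works on all $\delta\in(0,\delta_0]$. The principal obstacle is the construction of the $\tilde\eta_i$: producing, uniformly in $i$ and $\delta$, a Frostman $s$-measure on $E\cap B_i$ with total mass $\asymp(\delta^{1/\theta})^{s}$ is not implied by the classical Frostman lemma on its own and is precisely the content that $\mathcal{D}(E)$ is designed to encode. Once this uniformity is in hand, the two decay estimates match at the break-point $r=\delta^{1/\theta}$, where both branches of \eqref{aaclarar3} yield $\mu_\delta(B(x,\delta^{1/\theta}))\lesssim(\delta^{1/\theta})^{t}$, and the argument closes.
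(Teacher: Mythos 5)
Your gluing architecture (a coarse-scale measure $\nu_\delta$ from Proposition \ref{FrostmanI}(2) patched with local fine-scale $s$-Frostman measures on pieces of diameter $\approx\delta^{1/\theta}$) is genuinely different from the paper's proof, which runs a single top-down dyadic construction: mass $2^{-mt}$ is placed on each level-$m$ cube ($2^{-m}\approx\delta^{1/\theta}$), pushed down to finer cubes by equal subdivision among children meeting $E$, and then truncated upward through the coarser levels $m-1,\dots,m-\ell$ in the standard Frostman manner, with the covering definition of $\uda E$ used to bound the total mass from below. Your outline would assemble the same object from two black boxes, and the scale-matching and bounded-overlap bookkeeping you describe is essentially sound. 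But the proof as written has a genuine gap, and you name it yourself: you never construct the measures $\tilde\eta_i$. That construction is not a peripheral technicality — it is the entire mathematical content that the hypothesis $s<\mathcal{D}(E)$ contributes, and the theorem is not proved until it is carried out. Concretely, what makes it work is this: since $s<\mathcal{D}(E)$, there is $N$ with $\mathcal{N}_n(E)\geq 2^{s}$ for all $n\geq N$, i.e.\ every dyadic cube of level $n\geq N$ meeting $E$ has at least $2^{s}$ children meeting $E$. Starting from a level-$m$ cube $Q_0$ meeting $E$ with mass $(2^{-m})^{s}$ and dividing the mass equally among children meeting $E$ at each generation, one gets a measure on $E\cap Q_0$ in which every level-$n$ cube carries mass at most $(2^{-m})^{s}2^{-s(n-m)}=2^{-ns}$, hence $\tilde\eta(B(x,r))\leq c_d 2^{s}r^{s}$, with total mass exactly $(2^{-m})^{s}\gtrsim(\delta^{1/\theta})^{s}$. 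This is precisely the role of the quantity $\Phi_{m+1}(Q)$ and the inequality $\Phi_{m+1}(Q)\geq 2^{s(n-m)}$ in the paper's argument; without writing it out, your proposal reduces the theorem to an unproved claim. (A further wrinkle you would need to handle: $\mathcal{D}(E)$ speaks about dyadic cubes, not balls, so the $\tilde\eta_i$ naturally live on dyadic cubes of level $\approx\log_2(\delta^{-1/\theta})$ containing the centers $x_i$, which sit inside a bounded dilate of $B_i$; the bounded-overlap counts survive this with adjusted constants.)

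Two smaller points. First, your justification for extending the bound $\nu_\delta(B(y,\rho))\leq c_1\rho^{t}$ to the inflated radius $3r$ ``since $\nu_\delta$ is a probability measure'' does not work: for $r$ near $\delta$ the trivial bound $\nu_\delta(B(x,3r))\leq 1$ compares to $r^{t}$ only with a constant of order $\delta^{-t}$, which blows up. The correct (and still routine) fix is to cover $B(x,3r)\cap E$ by a dimensional number of balls of radius $r$ centred on $E$ and apply the Frostman bound to each. Second, your appeal to Proposition \ref{FrostmanI} treats as given a statement the paper does not prove but re-derives inside its construction from the covering definition of $\uda E$ (inequality \eqref{mu2} in the proof); this is legitimate if Proposition \ref{FrostmanI} is taken as known, but it means your route and the paper's are not as independent as they first appear -- the paper's single dyadic construction produces both decay regimes simultaneously, whereas yours must prove the local $s$-regularity separately and then match the two at $r=\delta^{1/\theta}$.
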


A Borel probability measure satisfying \eqref{aaclarar3} will be called a \((\delta, s, t)\)-Frostman measure.

\section{Preliminaries}\label{preliminares}

Throughout the whole document $\mathcal{D}_n$ will denote the classical dyadic partition of $\mathbb{R}\pot{d}$ into $2\pot{dn}$ half-open disjoint cubes of diameter $\sqrt{d}\,2\pot{-n}$. 

$B(x,r)$, $r>0$ will denote the open ball in $\rd$ with center $x$ and radio $r$ and $\mathcal{B}$ the Borel subsets of $\rd$. 

Given a non-empty set $A\subset\rd$ we will write $|A|$ for the diameter of the set and the distance between $A$ and a point $x$ will be $d(x,A)=\displaystyle\inf_{y\in A}|x-y|$.

The Hausdorff dimension of $A$ will be $\dimh A$ and $\dim_B A$ will denote Box-counting dimension of $A$ . The reader can refer to \cite{FBook} for more information on these dimensions.

$\mathcal{H}\pot{s}$ will always refer to the Hausdorff $s-$measure and $\mathcal{L}\pot{d}$ the $d-$dimensional Lebesgue measure.

The support of a measure $\mu$ on $\rd$ is defined as $spt\,\mu=\rd\setminus\{x:\exists\, r>0\mbox{ such that } \mu(B(x,r))=0\}$.

Throughout the paper Radon measure will mean a locally finite and Borel regular measure.

This document concerns the $\theta$ intermediate dimensions, which were introduced in \cite{FFK} and are defined as follows.

\begin{defi}\label{adef}
Let $F\subseteq \mathbb{R}\pot{d}$ be bounded. For $0\leq \theta \leq 1$ we define the {\em  lower $\theta$-intermediate dimension} of $F$ by
\begin{align*}
 \da F =  \inf \big\{& s\geq 0  : \ \forall\  \epsilon >0, \ {\rm and\ all }\  \delta_0>0, {\rm there}\ \exists\ 0<\delta\leq \delta_0, \ { \rm and }\  \\
 & \{U_i\}_{i\in I}\  : F \subseteq \cup_{i\in I}U_i \ : \delta^{1/\theta} \leq  |U_i| \leq \delta \ {\rm and }\ \sum_{i\in I} |U_i|^s \leq \epsilon  \big\}.
\end{align*}

Similarly, we define the {\em  upper $\theta$-intermediate dimension} of $F$ by
\begin{align*}
\uda F =  \inf \big\{& s\geq 0  : \ \forall\  \epsilon >0, \ {\rm there }\ \exists\ \delta_0>0, \ : \ \forall\ 0<\delta\leq \delta_0, \mbox{ \rm there }\  \exists \\
 & \{U_i\}_{i\in I}\  : F \subseteq \cup_{i\in I}U_i \ : \delta^{1/\theta} \leq  |U_i| \leq \delta \ {\rm and }\ \sum_{i\in I} |U_i|^s \leq \epsilon  \big\}.
\end{align*}
\end{defi}

For all $\theta\in (0,1]$ and a compact set $A\in\rd$ we have $\dimh A\leq \uda A\leq \ubd A$ and similarly with the lower case replacing $\ubd$ with $\lbd$. This spectrum of dimensions has the property of being continuous for $\theta\in(0,1]$, leaving the natural question of the continuity in $\theta=0$ as a problem to study. The reader can find more information in \cite{FFK}.

Finally, the lower dimension of a non-empty set $E\subset \mathbb{R}^d$ is defined as

\begin{align*}
\dim_L E = \sup \Big\{ 
&\alpha : \text{ there are constants } c, \rho > 0 \text{ such that} \\[4pt]
&\inf_{x \in E} N_r(B(x, R) \cap E) \geq c \left( \frac{R}{r} \right)^{\alpha}
\text{ for all } 0 < r < R < \rho 
\Big\}.
\end{align*}

For a compact set $F\subset \mathbb{R}^d$ we have
$$0\leq \dim_LF\leq \dimh F\leq \da F\leq \uda F\leq d.$$

The lower dimension is sensitive to the local structure of the set, for instance if $F$ has an isolated point then $\dim_L F=0$ and if $F$ is self-similar set then $\dim_L F=\dim_H F$. For more information on that dimension we refer to the Fraser's book \cite{fraser2020assouad}.

\section{Existence of  $(\delta,s,t)-$Frostman measures}

Our main interest is to find conditions on the set $E$ such that there exist a family of measures \(\{\mu_\delta\}\) that simultaneously satisfy both decay estimates \eqref{Hausdorff} and \eqref{FrostmanIneq}.

In order to be able to construct them, we first need to introduce a new parameter, which gives us information about the distribution of the set when refining dyadic cubes. Recall that $\#A$ is the number of elements of the set $A$.
\begin{defi}
    We define the \textit{Dyadic dimension} as follows:
    \begin{align*}\mathcal{D}(E):=\liminf_{n\to\infty}   
     \frac{\log\mathcal{N}_n(E)}{\log2}
    \end{align*}
\end{defi}
where $\mathcal{N}_n(E)=\displaystyle\min_{Q\in \mathcal{D}_n} \#\{Q'\in \mathcal{D}_{n+1} :\ Q'\cap( E\cap Q)\neq\emptyset\}$.

\begin{theo}\label{frostam modificado}
    Let $\theta >0$ and $E\subset \mathbb{R}\pot{d}$ be a compact set, such that $0<\mathcal{D}(E)$. Then for all $0<t<\uda E$ and all $0<s<\mathcal{D}(E)$ with $s\leq t$, there exists $c>0$ such that for all $\delta_0>0$ exist $\delta\in(0,\delta_0)$ and a Radon measure $\mu_{\delta}$ , supported on $E$,  satisfying
    \be\label{aclarar3}\mu_{\delta}(B(x,r))\leq  \left\{ \begin{array}{lcc} c (\delta\pot{1/\theta})\pot{t-s}r\pot{s} & if & r\in(0,\delta\pot{1/\theta}) \\ c r\pot{t} & if & \delta\in[\delta\pot{1/\theta},\delta] \end{array} \right.\ee
    for all $x\in E$.
    
    Equivalently,  for all $0<t<\da E$ and all $0<s<\mathcal{D}(E)$ with $s\leq t$, , there exists $c>0$ and $\delta_0>0$ such that for all $\delta\in (0,\delta_0]$ there exists a measure $\mu_\delta$ supported on $E$ satisfying \eqref{aclarar3}.
\end{theo}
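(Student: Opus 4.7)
The plan is to build $\mu_\delta$ as a two-scale measure: at macroscopic scales $r\in[\delta^{1/\theta},\delta]$ it will essentially inherit the intermediate Frostman measure given by Proposition \ref{FrostmanI}(2), producing the $t$-decay, while at microscopic scales $r<\delta^{1/\theta}$ it will be obtained by a uniform dyadic redistribution inside each cube of side $\approx \delta^{1/\theta}$. The hypothesis $s<\mathcal{D}(E)$ guarantees enough splitting at every dyadic step to produce the microscale decay, and the prefactor $(\delta^{1/\theta})^{t-s}$ will arise naturally from matching the two scales at the transition $r=\delta^{1/\theta}$. Concretely, fix an auxiliary exponent $s'$ with $s<s'<\mathcal{D}(E)$; the definition of $\mathcal{D}(E)$ yields $N$ such that $\mathcal{N}_n(E)\geq 2^{s'}$ for all $n\geq N$.

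Given $\delta_0>0$, shrink it so that for any $\delta\in(0,\delta_0)$ the level $n_1$ determined by $\sqrt{d}\,2^{-n_1}\leq \delta^{1/\theta}<2\sqrt{d}\,2^{-n_1}$ satisfies $n_1\geq N$. Apply Proposition \ref{FrostmanI}(2) to $t\in(0,\uda E)$: it returns some $\delta\in(0,\delta_0)$ and a Borel probability measure $\nu_\delta$ supported on $E$ with $\nu_\delta(B(x,r))\leq c_1 r^t$ for $r\in[\delta^{1/\theta},\delta]$. For each cube $Q\in\mathcal{D}_{n_1}$ meeting $E$, I will define a probability measure $\tau_Q$ on $Q\cap E$ recursively by uniform mass splitting: the mass of every level-$n$ sub-cube $R\subset Q$ meeting $E$ is spread equally among its $\geq\mathcal{N}_n(E)\geq 2^{s'}$ level-$(n+1)$ children meeting $E$. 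A telescoping product then gives $\tau_Q(R')\leq 2^{-s'(n-n_1)}$ for every level-$n$ sub-cube $R'\subset Q$. I then define
\[
\mu_\delta \;:=\; \sum_{Q\in\mathcal{D}_{n_1},\,Q\cap E\neq\emptyset} \nu_\delta(Q\cap E)\,\tau_Q,
\]
a Radon measure supported on $E$.

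Verification proceeds scale by scale. For $r\in[\delta^{1/\theta},\delta]$, any $Q\in\mathcal{D}_{n_1}$ meeting $B(x,r)$ lies in $B(x,r+\delta^{1/\theta})\subset B(x,2r)$, hence
\[
\mu_\delta(B(x,r)) \;\leq\; \nu_\delta(B(x,r+\delta^{1/\theta})\cap E) \;\leq\; c_1(2r)^t
\]
whenever $r+\delta^{1/\theta}\leq\delta$, which covers essentially the whole range since $\delta^{1/\theta}\ll\delta$ for small $\delta$ and $\theta\leq 1$. For $r\in(0,\delta^{1/\theta})$, pick $n\geq n_1$ with $\sqrt{d}\,2^{-n}\leq r<2\sqrt{d}\,2^{-n}$; the ball $B(x,r)$ meets at most $C_d$ level-$n$ cubes $R'$, each contained in a unique $Q\in\mathcal{D}_{n_1}$. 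Combining $\nu_\delta(Q)\leq c_1(\delta^{1/\theta})^t$ (from the lower Frostman endpoint) with the telescoping estimate for $\tau_Q$ gives
\[
\mu_\delta(R') \;\leq\; c_1(\delta^{1/\theta})^t\cdot 2^{s'}(r/\delta^{1/\theta})^{s'} \;=\; 2^{s'}c_1(\delta^{1/\theta})^{t-s'}r^{s'}.
\]
Summing over the $\leq C_d$ relevant cubes and using $r\leq\delta^{1/\theta}$ with $s'>s$ to rewrite $r^{s'}\leq(\delta^{1/\theta})^{s'-s}r^s$ produces the required bound $c(\delta^{1/\theta})^{t-s}r^s$.

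The main obstacle will be handling the thin boundary region near $r=\delta$ in the macroscopic estimate, where the inflated radius $r+\delta^{1/\theta}$ escapes the Frostman range of $\nu_\delta$; I expect this to be absorbed either by enlarging $c$ or by applying Proposition \ref{FrostmanI}(2) at a slightly enlarged initial scale such as $2\delta$. The equivalent statement involving $\da E$ requires no new idea: Proposition \ref{FrostmanI}(1) supplies such a Frostman measure $\nu_\delta$ for every sufficiently small $\delta$, and the rest of the construction is identical.
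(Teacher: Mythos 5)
Your argument is correct, but it takes a genuinely different route from the paper's. You treat the coarse scales as a black box: you invoke Proposition \ref{FrostmanI}(2) to obtain a measure $\nu_\delta$ with the $t$-decay on $[\delta^{1/\theta},\delta]$, and then post-process it by uniform dyadic redistribution inside each cube of generation $n_1\approx\log_2(1/\delta^{1/\theta})$, using $\mathcal{N}_n(E)\ge 2^{s'}$ to get the $s$-decay below the transition scale. The paper instead builds the whole measure from scratch: it starts from the definition of $\uda E$ (the lower bound $\sum_i|U_i|^t>\epsilon$ for admissible covers), assigns mass $2^{-mt}$ to each level-$m$ cube meeting $E$, spreads it uniformly downward through the dyadic children (which is where $s<\mathcal{D}(E)$ enters, via $\Phi_{m+1}(Q)\ge 2^{s(n-m)}$), and then runs the classical upward capping procedure of Frostman's lemma from level $m$ up to level $m-\ell\approx\log_2(1/\delta)$, using the covering lower bound to control the total mass before normalizing. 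In effect the paper re-proves Proposition \ref{FrostmanI}(2) with the uniform fine-scale distribution built in, which keeps the argument self-contained and gives explicit control of the constants; your version buys modularity and brevity at the cost of relying on the Proposition (stated but not proved in the paper) and of the small boundary issue near $r=\delta$, where the inflated ball $B(x,r+\delta^{1/\theta})$ leaves the admissible range. That issue is real but harmless: covering $B(x,2r)\cap E$ for $r\in(\delta/2,\delta]$ by a dimensional constant number of $\delta$-balls centred in $E$ absorbs it into $c$, exactly as you anticipate. The remaining details you give (the telescoping bound $\tau_Q(R')\le 2^{-s'(n-n_1)}$, the bound $\nu_\delta(Q\cap E)\le c_1(\delta^{1/\theta})^t$, and the conversion $r^{s'}\le(\delta^{1/\theta})^{s'-s}r^s$) all check out, and the lower-dimension variant via Proposition \ref{FrostmanI}(1) goes through verbatim.
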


\begin{proof}

If $s=t$ the result follows by choosing the measure from the classical Frostman lemma for Hausdorff dimension \eqref{Hausdorff}.
Let $0<s<\mathcal{D}(E)$ and $N$ sufficiently large such that $2^s<\mathcal{N}_n(E)$ whenever $n\geq N$. 
Let $\mathcal{D}_m$ ($m\geq N$), the dyadic partition of $[0,1]\pot{d}$, i.e. , $2\pot{dm}$ pairwise disjoint half-open dyadic cubes of side length $2\pot{-m}$.  Without loss of generality we may assume that $E\subset [0,1]\pot{d}$ and that $E$ is not contained in any $Q\in\mathcal{D}_1$.

Let $\delta_0$ be sufficiently small such that $\delta_02\pot{N}<1$.

Let $t<\uda E$. From the definition of $\uda E$ we have that there exists $\epsilon>0$ and a decreasing sequence $\{\delta_k\}_{k\in\mathbb{N}}$ with $\delta_1 <\delta_0$ and $\delta_k \rarw 0$ when $k\rarw\infty$ such that for all covers $\{U_i\}_i$ of $E$ with $\delta_k\pot{1/\theta}\leq |U_i|\leq \delta_k$, we have

\begin{equation}\label{mu2}
    \displaystyle\sum_i |U_i|\pot{t}>\epsilon.
\end{equation}

Given $k$, let $m\geq 0$ be the unique integer satisfying $2\pot{-m-1}<\delta_k\pot{1/\theta}\leq 2\pot{-m}$. 

The idea is to assign to each cube $Q\in \mathcal{D}_m$ that intersects $E$, a mass $\mu_m (Q)= 2\pot{-mt}$ and then distribute it uniformly into the cubes in $\mathcal{D}_{m+1}$ that intersect $E$ and so on.

For this, let $Q\in \mathcal{D}_n$ with $n\geq m+1$ and let $Q\pot{*_i}$ the unique cube in $\mathcal{D}_{n-i}$ that contains $Q$. Define $\# \mathcal{Q}_{n}(Q)$ as the number of cubes in $\mathcal{D}_n$ that intersect $E\cap Q\pot{*_1}$. Let us denote $Q\pot{*_0}=Q$, and for $Q\in \mathcal{D}_n$ define $\displaystyle\Phi_{m+1}(Q)=\displaystyle\Pi_{i=0}\pot{n-(m+1)}\#\mathcal{Q}_{n-i}(Q\pot{*_i})$.

Basically for $Q\in \mathcal{D}_n$, the parameter $\Phi_{m+1}(Q)$ is an idea of how much of the set $E\cap Q'$, where $Q'$ is the unique cube $Q'\in\mathcal{D}_m$, such that  $Q\subset Q'$, is contained in the different levels from $\mathcal{D}_m$ until $\mathcal{D}_n$.

With this in mind, we define a measure $\mu_m$ on $\mathbb{R}\pot{d}$ such that for any $n\geq m+1$ and $Q\in \mathcal{D}_n$

$$\mu_m  (Q)= \left\{ \begin{array}{lcc} 2\pot{-mt}/\Phi_{m+1} (Q) & if & Q\cap E\neq\emptyset \\ 0 & if & Q\cap E =\emptyset.\end{array} \right.$$

For  each $Q\pot{*}\in\mathcal{D}_m$ that intersects $E$ we have

$$\mu_m(Q\pot{*})=\displaystyle\sum_{\{Q\in\mathcal{D}_{m+1}:Q\subset Q\pot{*}\}} \mu_m(Q)=\Phi_{m+1}(Q) 2\pot{-mt}/\Phi_{m+1}(Q) =2\pot{-mt}.$$

Using this and recalling that $Q\pot{*i}$ is the unique cube in $\mathcal{D}_{n-i}$ that contains $Q$, we have for any cube $Q\in\mathcal{D}_n$ ($n\geq m+1$):

\begin{equation}\label{muineq}
\begin{split}
    \mu_m(Q)=\frac{2\pot{-mt}}{\Phi_{m+1}(Q)} &=\frac{\mu_m(Q\pot{*(n-m)})}{\Phi_{m+1}(Q)}\\
    &=\frac{\mu_m(Q\pot{*(n-m)})}{\Phi_{m+1}(Q)} \frac{\#\mathcal{Q}_m(Q\pot{*(n-m)})}{\#\mathcal{Q}_m(Q\pot{*(n-m)})}\\
    &=\frac{\mu_m(Q\pot{*(n-(m-1))})}{\Phi_m(Q)}.
    \end{split}
\end{equation}

The next step is to modify the measure $\mu_m$ to obtain a measure $\mu_{m-1}$.

For $Q\in \mathcal{D}_{n}$ with $n\geq m $ let

$$\mu_{m-1}(Q) = \left\{ \begin{array}{lcc} 2\pot{-(m-1)t}/\Phi_m (Q) & if &  \mu_m (Q\pot{*(n-(m-1))})>2\pot{-(m-1)t} \\ \mu_m (Q) & if & \mu_m (Q\pot{*(n-(m-1))})\leq 2\pot{-(m-1)t}.\end{array} \right.$$

Again we have for  $Q\pot{*}\in\mathcal{D}_{m-1}$ that intersects $E$:

\ 

If $\mu_m (Q\pot{*})\leq 2\pot{-(m-1)t}$ then 

$$\mu_{m-1}(Q\pot{*})=\sum_{Q\in\mathcal{D}_m :Q\subset Q\pot{*}}\mu_{m-1}(Q)=\sum_{Q\in\mathcal{D}_m :Q\subset Q\pot{*}}\mu_{m}(Q)=\mu_m (Q\pot{*})\leq 2\pot{-(m-1)t}.$$

\

And if $\mu_m (Q\pot{*})> 2\pot{-(m-1)t}$ then 

$$\mu_{m-1}(Q\pot{*})=\sum_{Q\in\mathcal{D}_m :Q\subset Q\pot{*}}\mu_{m-1}(Q)= \sum_{Q\in\mathcal{D}_m :Q\subset Q\pot{*}}2\pot{-(m-1)t}/\Phi_m (Q)=2\pot{-(m-1)t}.$$
\vspace{0.5 cm}

Then for $Q\in \mathcal{D}_{m-1}$\\
\begin{equation*}
    \mu_{m-1}(Q)\leq 2\pot{-(m-1)t} .
\end{equation*}

In the same way, using \eqref{muineq}, we have for $Q\in \mathcal{D}_n$, $n\geq m$:

If $\mu_m (Q\pot{*(n-(m-1))})\leq 2\pot{-(m-1)t}$ then $\mu_{m-1}(Q)=\mu_m(Q)$

and if $\mu_m (Q\pot{*(n-(m-1))})> 2\pot{-(m-1)t}$

$$\mu_{m-1}(Q)=\frac{2\pot{-(m-1)t}}{\Phi_m(Q)}<\frac{\mu_m (Q\pot{*(n-(m-1))})}{\Phi_m(Q)}=\mu_m (Q),$$
and therefore

$$\mu_{m-1} (Q)\leq \mu_m (Q), \quad \text{ for any } Q\in\mathcal{D}_n\ , n\geq m-1.$$

Proceeding inductively, given $\mu_{m-k}$ construct a measure $\mu_{m-k-1}$ such that for $Q\in\mathcal{D}_n$ with $n\geq m-k $

$$\mu_{m-k-1}(Q) = \left\{ \begin{array}{lcc} 2\pot{-(m-k-1)t}/\Phi_{m-k} (Q) & if &  \mu_{m-k} (Q\pot{*(n-(m-k-1))})>2\pot{-(m-k-1)t} \\ \mu_{m-k} (Q) & if & \mu_{m-k} (Q\pot{*(n-(m-k-1))})\leq 2\pot{-(m-k-1)t}.\end{array} \right.$$

Terminate this process with $\mu_{m-\ell}$ where $\ell$ is the largest
integer satisfying $2\pot{-(m-l)}n\pot{1/2}\leq\delta$, so for each cube $Q\in\mathcal{D}_{m-\ell}$ we have 
$|Q|=2\pot{-(m-\ell)}n\pot{1/2}\leq\delta $.

By construction we have

\begin{equation}\label{mu}
    \mu_{m-\ell}(Q_i)\leq 2\pot{-(m-i)t} \hspace{1 cm} Q_i\in\mathcal{D}_{m-i},\ i=0,...,\ell
\end{equation}
and

\begin{equation}\label{ineqmu}
    \mu_{m-\ell}(\cdot)\leq\mu_{m-\ell+1}(\cdot)\leq \dotsi \leq \mu_m (\cdot)
\end{equation}

Further, if a cube $Q$ satisfies an equality in \eqref{mu} for $\mu_{m-i}$ then either $Q$ or $Q\pot{*1}$ satisfies the equality for $\mu_{m-i-1}$.

Since all the dyadic cubes in $\mathcal{D}_m$ satisfy $\mu_m(Q)=2\pot{-m}$, we have that for all $x\in E$, there exists $0\leq i\leq \ell$ such that for some cube $Q\in\mathcal{D}_{m-i}$ that contains $x$, \eqref{mu} is an equality.

For each $x\in E$, choose the largest cube $Q$ that contains $x$ and satisfies the equality in $\eqref{mu}$. This process yields a finite collection of cubes $Q_1 , Q_2,... Q_p$ which cover $E$ and satisfy $\delta\pot{1/\theta}\leq |Q_i|\leq \delta$ for $i=1,2...,p$.

Now, using (\ref{mu2}) we have

$$\mu_{m-\ell}(E)=\displaystyle\sum_{i=1}\pot{p}\mu_{m-\ell}(Q_i)=\sum_{i=1}\pot{p} |Q_i|\pot{t}d\pot{-t/2}>\epsilon d\pot{-t/2}.$$

Define $\mu_{\delta_k}(\cdot) = \mu_{m-\ell}(E)\pot{-1}\mu_{m-\ell}(\cdot)$.

Finally, let $\sigma(\mathcal{D})$ the $\sigma$ algebra generated by $\mathcal{D}=\bigcup_{n\in\mathbb{N}}\mathcal{D}_n$, and then, if $A\subset\rd$, we can extend $\mu_{\delta_k}$ to $\rd$ by $\mu_{\delta_k}(A)=\inf \left\{ \mu_{\delta_k}(B):A\subset B\in \sigma(\mathcal{D})\right\}$.

Now we will see that $\mu_{\delta_k}$ satisfies the inequality \eqref{aclarar3}.

Let $x\in E$, $\delta_k\pot{1/\theta}\leq r\leq \delta$, then if $k$ is chosen as the unique integer satisfying $2\pot{-(m-k+1)}< r\leq 2\pot{-(m-k)}$, we have that the ball $B(x,r)$ is contained in $c_d$ cubes in $\mathcal{D}_{m-k}$ where $c_d$ is a constant that depends only on $d$.

Then, again by (\ref{mu}) we have

$$\mu_{\delta_k}(B(x,r))\leq c_d \mu_{m-l}(E)\pot{-1}2\pot{-(m-k)t}\leq c_d \epsilon\pot{-1} d\pot{t/2} 2\pot{t} r\pot{t}.$$

Therefore we have that there exists $C$ only depending on $d$ and $t$ such that  for all $\delta_k\pot{1/\theta}\leq r\leq \delta_k$ and $x\in E$ 

\begin{equation}\label{frostman}
    \mu_{\delta_k}(B(x,r))\leq Cr\pot{t}.
\end{equation}

Now, by our choice of $s$ and $\delta_0$ we have that for each cube $Q\in \mathcal{D}_n$, with $n\geq m+1$,

\begin{equation}\label{muineqineq}
    \,2\pot{s(n-m)}\leq \Phi_{m+1}(Q).
\end{equation}

Let $x\in E$ and $r<\delta_k\pot{1/\theta}$ then if $n'$ is the unique integer satisfying $2\pot{-n'-1}< r \leq 2\pot{-n'}$ we have that the ball $B(x,r)$ is contained in at most $c_d$ cubes in $\mathcal{D}_{n'}$ and therefore

$$\mu_{\delta_k}(B(x,r))\leq c_d \max \{ \mu_{\delta_k}(Q) : Q\in\mathcal{D}_{n'}\}=c_d \mu_{\delta_k}(Q_{n'}),$$

for some $Q_{n'}\in \mathcal{D}_{n'}$. Now using \eqref{ineqmu} and \eqref{muineqineq} we have

\begin{equation}\label{frostman N2}
    \begin{split}
        \mu_{\delta_k}(B(x,r))\leq c_d \mu_{\delta_k}(Q_{n'}) &\leq c_d \frac{\mu_m (Q_{n'}\pot{*(n'-m)})}{\Phi_{m+1}(Q_{n'})}\\
        &\leq c\, c_d \frac{2\pot{-mt}}{2\pot{s (n'-m)}}\\
        &\leq c\, c_d(2\pot{-m})\pot{t-s}(2\pot{-n'})\pot{s}\\
        &\leq C' (\delta_k\pot{1/\theta})\pot{t-s}r\pot{s},
    \end{split}
\end{equation}

with $C'=c\,c_d 2\pot{t-s}$.

Then, combining \eqref{frostman} and \eqref{frostman N2} the result follows.

 The case of $\da E$ is similar.
\end{proof}

By the Frostman lemma (see \eqref{Hausdorff}), the optimal situation for constructing a measure satisfying the decay estimate \eqref{aaclarar3} corresponds to the case $s \leq \dim_H E$. 

It is not hard to show that, for any compact set $E$, one has
\[
\mathcal{D}(E) \leq \dim_L E.
\]  

However, as far as we know, the reverse inequality does not necessarily hold. This observation naturally leads to the following question.

\begin{question}
Is it true that for all \(s < \dim_L E\) (or even in the optimal case \(s < \dim_H E\)) and \(0 < t < \overline{\dim}_\theta E\) with \(t \leq s\), there exists a constant \(c > 0\) such that for every \(\delta_0 > 0\) there exist \(\delta \in (0, \delta_0)\) and a Radon measure \(\mu_\delta\) supported on \(E\) satisfying \eqref{aaclarar3}?\\[0.3em]
Moreover, does the same conclusion hold for all sufficiently small \(\delta > 0\) if we replace \(\overline{\dim}_\theta E\) with \(\underline{\dim}_\theta E\)?
\end{question}

It seems plausible that the result could be further improved by refining the argument, replacing the dyadic decomposition with more general partitions of the ambient space. Exploring this direction could be an interesting avenue for future work.

\section*{Funding}
The research of the authors is partially supported by Grants
PICT 2022-4875 (ANPCyT) (no disbursements since December 2023), 
PIP 202287/22 (CONICET), and 
UBACyT 2022-154 (UBA). Nicolas Angelini is also partially supported by PROICO 3-0720 ``An\'alisis Real y Funcional. Ec. Diferenciales''.

\bibliographystyle{plain} 
\bibliography{ref}
\end{document}